\newcommand{\R}{\mathbb{R}}
\swapnumbers \theoremstyle{plain}
\newtheorem{thm}[equation]{Theorem}
\newtheorem{lemma}[equation]{Lemma}
\theoremstyle{definition}
\theoremstyle{remark}
\newtheorem{rem}[equation]{Remark}
\numberwithin{equation}{section}
\title{Lipschitz equivalence of subsets of self-conformal sets}
\author{Marta Llorente}
\address{Marta Llorente: Departamento de
 An\'{a}lisis Econ\'{o}mico: Econom\'{i}a Cuantitativa\\Universidad Aut\`{o}noma de Madrid, Campus de 
Cantoblanco 28049 Madrid \\  Spain\\ }
\email{m.llorente@uam.es }
\thanks{Part of this job has been done while MLL was visiting the University of Helsinki and supported by the 
Academy of Finland. MLL was also supported by the Ministerio de Educaci´{o}n y Ciencia, research project 
MTM2006-02372.} 
\subjclass[2000]{Primary 28A75,28A80}
\author{Pertti Mattila}
\address{Pertti Mattila: Department of Mathematics and Statistics\\FI-00014 University of Helsinki\\ Finland\\ }
\email{pertti.mattila@helsinki.fi}
\thanks{PM was supported by the Academy of Finland.} 
\subjclass[2000]{Primary 28A75, 28A80}
\keywords{Bilipschitz maps, self-similar sets, self-conformal sets}
\date{}
\begin{document}
\begin{abstract} We give sufficient conditions to guarantee that if two self-conformal sets $E$
and $F$ have Lipschitz equivalent subsets of positive measure, then there is a bilipschitz map
of $E$ into, or onto, $F$.
\end{abstract}

\maketitle

\section{Introduction}

In this note we shall consider the following question: suppose
that $E$ and $F$ are self-conformal (see below for terminology)
subsets of $\R^n$ of the same Hausdorff dimension $s$. If there
are measurable subsets $E'\subset E$ and $F'\subset F$ of positive
$s$-dimensional Hausdorff measure which are Lipschitz equivalent,
are then also $E$ and $F$ Lipschitz equivalent? By Lipschitz equivalence
we mean that there is a bilipschitz map of $E$ onto $F$. We shall prove that
this is true for some Cantor type sets, more precisely, when $E$ and
$F$ satisfy the strong separation condition and one of these sets is
generated by two maps. If $E$ and $F$ satisfy the open
set condition we shall show that there is a bilipschitz map of $E$
into $F$, but not necessarily onto.

Lipschitz equivalence of self-similar and self-conformal sets has
been considered in \cite{FM}, \cite{RRX} and \cite{RRY}, and a general study
of sets having many Lipschitz equivalent subsets can be found in \cite{DS}. In
particular, Falconer and Marsh gave necessary algebraic conditions
on the similarity ratios of the generating maps in order that two
sets satisfying strong separation condition could be Lipschitz
equivalent. These imply, for example, that many self similar subsets
of Hausdorff dimension $\log2/\log3$ are not Lipschitz
equivalent with the classical $1/3$-Cantor set. Hence by our result
in such a case neither are any of their subsets of positive measure.
In \cite{MS} it was shown for the much larger class of Ahlfors-David
regular sets $E\subset\R^n$ and $F\subset\R^n$ of dimensions $s<t<1$,
respectively, that $E$ is Lipschitz equivalent to some subset of $F$.
Combining the results of Falconer and Marsh and the results of this paper
we see that this cannot be extended to the case $s=t$.

\section{Preliminaries}

We shall denote the closed ball with center $x$ and radius $r$ by
$B(x,r)$. The diameter of a set $A$ is denoted by $d(A)$. A map
$h:A\to B, A\subset\R^n, B\subset\R^p$, is said to be bilipschitz,
or $L$-bilipschitz, if there is $L<\infty$ such that
$$|x-y|/L\leq|h(x)-h(y)|\leq L|x-y|\ \text{for all}\ x,y\in A.$$
The smallest such a constant $L$ is denoted by bilip$(h)$. Note that we
don't require $h$ to be onto.

We shall make use of the following simple lemma:

\begin{lemma}\label{le1} Let $A_k\subset A\subset\R^n, B_k\subset B\subset\R^p,$ be compact and
$h_k:A_k\to B_k,k=1,2,\dots,$ be such that for some $L,0<L<\infty$,
$$|x-y|/L\leq|h_k(x)-h_k(y)|\leq L|x-y|\ \text{for all}\ x,y\in A_k, k=1,2,\dots.$$
If for every $x\in A$ there are $x_k\in A_k$ with $x_k\to x$, then
there is an $L$-bilipschitz map $h:A\to B$. If also $h_k(A_k)=B_k$ and for every
$y\in B$ there are $y_k\in B_k$ with $y_k\to y$, then $h(A)=B$.
\end{lemma}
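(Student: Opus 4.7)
The plan is a standard Arzelà--Ascoli / diagonal-extraction argument, together with an extension by density. First I would fix a countable dense subset $D = \{a_1, a_2, \dots\}$ of the compact set $A$, and for each $a_j$ invoke the hypothesis to select points $a_j^k \in A_k$ with $a_j^k \to a_j$. Each sequence $(h_k(a_j^k))_k$ lies in the compact set $B$, so a Cantor diagonal extraction produces a subsequence of indices (still denoted $k$) along which $h_k(a_j^k)$ converges for every $j$; call the limit $h(a_j) \in B$.

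Next I would pass to the limit in the inequalities
$|a_i^k - a_j^k|/L \leq |h_k(a_i^k) - h_k(a_j^k)| \leq L|a_i^k - a_j^k|$
to obtain the $L$-bilipschitz property of $h$ on $D$. Since $D$ is dense in the compact set $A$ and $h$ is Lipschitz on $D$, it has a unique continuous extension $h : A \to B$, and the two-sided Lipschitz bounds persist on $A$ by continuity of the Euclidean distance. A short additional verification shows the useful \emph{continuity under approximation}: for any $x \in A$ and any sequence $x_k \in A_k$ with $x_k \to x$, one has $h_k(x_k) \to h(x)$ along the extracted subsequence. Indeed, choosing $a_j \in D$ with $a_j \to x$, the bound $|h_k(x_k) - h_k(a_j^k)| \le L |x_k - a_j^k|$ gives $\limsup_k |h_k(x_k) - h(a_j)| \le L|x - a_j|$, and letting $j \to \infty$ and using that $h(a_j) \to h(x)$ yields the claim.

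For the ``onto'' part, given $y \in B$ I would use the hypothesis to pick $y_k \in B_k$ with $y_k \to y$, and set $x_k = h_k^{-1}(y_k) \in A_k \subset A$. By compactness of $A$, some subsequence satisfies $x_k \to x \in A$. Along this subsequence, the continuity-under-approximation property gives $h_k(x_k) \to h(x)$, while by construction $h_k(x_k) = y_k \to y$. Hence $h(x) = y$, so $h(A) = B$.

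The core of the argument is elementary compactness plus uniform equicontinuity, and the only piece demanding a bit of care is the bookkeeping of nested subsequences in the diagonal step and the subsequent verification that $h$ is independent of the approximating sequence chosen to define it; this is precisely what the continuity-under-approximation statement encapsulates, and it is what makes the ``onto'' step clean.
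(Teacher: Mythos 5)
Your proof is correct, and it takes a recognizably different route from the paper's. The paper first extends each $h_k$ to an $L$-Lipschitz map on all of $\R^n$ (for target $\R^p$ with the constant preserved this is Kirszbraun's theorem, though a cruder coordinatewise extension would also do) and then applies Arzel\`a--Ascoli to the extended maps, obtaining uniform convergence on compacta; the bilipschitz property and surjectivity of the restriction to $A$ are left as easy checks. You instead bypass the extension step entirely: you run the diagonal extraction directly on a countable dense subset $D\subset A$ using the approximating points $a_j^k\in A_k$, pass to the limit in the two-sided Lipschitz inequalities on $D$, and extend to $A$ by uniform continuity. What your version buys is self-containedness (no Lipschitz extension theorem, so the argument works verbatim with $B$ a compact subset of any complete metric space) and, more importantly, an explicit \emph{continuity-under-approximation} statement ($x_k\in A_k$, $x_k\to x$ implies $h_k(x_k)\to h(x)$ along the extracted subsequence), which is exactly the tool needed to make the ``onto'' claim -- dismissed in the paper as ``simple'' -- genuinely clean: you pull back $y_k\in B_k$ by the injective maps $h_k$, extract a convergent subsequence of preimages by compactness of $A$, and conclude. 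What the paper's version buys is brevity and a limit map defined on all of $\R^n$ at once, with no bookkeeping of dense sets or nested subsequences. Both arguments are compactness plus equicontinuity at heart, and both are complete.
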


\begin{proof} We can extend the maps $h_k$ to $L$-Lipschitz maps $\R^n\to\R^p$; we shall denote by
 $h_k$ also the extended maps. By the Arzela-Ascoli theorem the
 sequence $(h_k)$ has a subsequence which converges uniformly on
 compact subsets of $\R^n$ to an $L$-Lipschitz map $h$. It is easy
 to check that $h|A:A\to B$ is $L$-bilipschitz, and also the last
 claim is simple.
 \end{proof}

 We denote by $\mathcal H^s$ the $s$-dimensional Hausdorff measure.
 We shall use the fact (see, e.g. Theorem 6.2 in \cite{M}) that for any $\mathcal H^s$ measurable sets
 $A\subset E\subset\R^n$ with $\mathcal H^s(E)<\infty$,
 $\mathcal H^s$ almost all points $x\in A$ are density points of $A$ with respect to $E$ in the sense that
 \begin{align}
 \lim_{r \to 0}r^{-s}\mathcal H^s(B(x,r)\cap E \setminus A)=0.
 \end{align}

 We shall consider a conformal iterated function system $\{f_1,\dots,f_N\}$ in $\R^n$ following the scheme of \cite{MU}.
By this we mean that
 $N\geq2$ and there is an
 open connected set $V\subset\R^n$ such that each $f_i:V\to V$ is an injective  conformal
contraction;
\begin{equation}
\label{eq0}
|f_i(x)-f_i(y)|\leq L_0<1\ \text{for all}\ x,y\in V, i=1,\dots,N,
\end{equation}
of class $C^{1+\gamma}$
 for some fixed $\gamma>0$, that is, the partial derivatives are H\"older continuous with exponent $\gamma$. Of
 course, the last condition is only needed when $n=1$, since the conformal maps in higher dimensions are
 $C^{\infty}$. We shall also assume that there are positive constants $c_1$ and $C_1$ such that $0<c_1<C_1<1$ and
 $$c_1\leq||Df_i(x)||\leq C_1\ \text{for all}\ x\in V, i=1,\dots,N.$$
 Here $||\cdot||$ is the operator norm of a linear map.
 Then there is a unique compact invariant set $E$ such that (see \cite{H})
 $$E=\bigcup_{i=1}^Nf_i(E).$$

 We shall use the following notation.
 Let $\mathcal{N}=\{1,2,...,n\}$ and
 $$\mathcal{N}^{k}=\{\mathbf{i}=(i_{1},...,i_{k}):i_{j}\in \mathcal{N}\ \forall \ j=1,...k\}.$$
 For $\mathbf{i}=(i_{1},...,i_{k})\in\mathcal{N}^{k}$ let
 \begin{eqnarray*}
 &f_\mathbf{i}=f_{i_{1}}\circ f_{i_{2}}\circ
 ...\circ f_{i_{k}},\\
 &E_{\mathbf{i}}=f_{\mathbf{i}}(E),\\
 &d_{\mathbf{i}}=d(E_{\mathbf{i}}).
 \end{eqnarray*}
 Then for every $k=1,2,\dots,$
 $$E=\bigcup_{\mathbf{i}\in\mathcal{N}^{k}}f_{\mathbf{i}}(E).$$

 We shall assume that the system $\{f_i\}$ satisfies the open set
 condition, that is, there is a non-empty bounded open set $O\subset V$
 such that the closure of $O$ is contained in $V$ and the sets $f_i(O)$ are disjoint subsets of $O$.
Then the following bounded distortion property holds, see \cite{MU}, Remark 2.3: there
 is constant $K$ such that for $\mathbf{i}\in\mathcal{N}^{k}$,
 $$||Df_{\mathbf{i}}(x)||\leq K||Df_{\mathbf{i}}(y)||\ \text{for all}\ x,y\in V.$$

 Let $s$ be the Hausdorff dimension of $E$. The open set condition
 implies (and it is in fact equivalent to) that $0<\mathcal
 H^s(E)<\infty$, see \cite{PRSS}. From the bounded distortion
 property one can conclude that there exist
 positive constants $c<1, C$ and $R$ such that for all
 $\mathbf{i}=~(i_1,\dots,i_k)\in\mathcal{N}^{k}$ and $\mathbf{i}
 i=(i_1,\dots,i_k,i)\in\mathcal{N}^{k+1}$,
 \begin{equation}\label{eq1}
 cd_{\mathbf{i}} \leq||Df_{\mathbf{i}}(x)||\leq Cd_{\mathbf{i}}\
 \text{for}\ x\in V,
 \end{equation}
 \begin{equation}\label{eq4'}
 B(f_{\mathbf{i}}(x),cd_{\mathbf{i}}r)\subset f_{\mathbf{i}}(B(x,r))\
 \text{for}\ x\in E, 0<r<R,
 \end{equation}
 \begin{equation}\label{eq4}
 cd_{\mathbf{i}}|x-y| \leq|f_{\mathbf{i}}(x)-f_{\mathbf{i}}(y)|\leq
 Cd_{\mathbf{i}}|x-y|\ \text{for}\ x,y\in E,
 \end{equation}
\begin{equation}\label{eq2}
d_{\mathbf{i} }\leq Cd_{\mathbf{i} i},
 \end{equation}
\begin{equation}\label{eq2'}
d_{\mathbf{i} }\leq L_0^kd(E), d_{\mathbf{i}}\to0\ \text{when}\ \mathbf{i}
 \in\mathcal{N}^k, k\to\infty,
 \end{equation}
 \begin{equation}\label{eq3}
 cr^s \leq\mathcal H^s(B(x,r)\cap E)\leq Cr^s\
 \text{for}\ x\in E, 0<r<R.
 \end{equation}

Here (\ref{eq1}) and (\ref{eq4'}) are proven in Section 2 of \cite{MU},
(\ref{eq4}) follows easily from  (\ref{eq1}), (\ref{eq2}) follows from (\ref{eq4}),
(\ref{eq2'}) follows from  (\ref{eq0}), and (\ref{eq3}) is proven
in Lemma 3.14 of \cite{MU} (where a measure $m$ is used instead of $\mathcal H^s$,
but this is equivalent).

 We shall assume all the time that $f_{\mathbf{i}}, E,
 E_{\mathbf{i}}, d_{\mathbf{i}}, s, c$ and $C$ are as above. We shall
 also consider another conformal iterated function system\newline
 $\{g_1,\dots,g_P\}$ in $\R^p$ and use the corresponding notation
 $g_{\mathbf{i}}, F, F_{\mathbf{i}}, e_{\mathbf{i}},s, c$ and $C$; in
 particular we assume that $E$ and $F$ have the same Hausdorff dimension
 $s$ and we choose the constants $c$ and $C$ so that they match
both systems.

 \section{Open set condition}

 We shall use the result of Peres, Rams, Simon and Solomyak from \cite{PRSS}
 (proven first by Schief in \cite{S} for self-similar sets) according to which
 the open set condition is equivalent to the strong open set condition:
 for some open set $O$ as in the open set condition $E\cap O\not=\emptyset$.
 Both are also equivalent with $0<\mathcal H^s(E)<\infty$.

 It is easy to see that if $O$ is as in the open set condition,
 then $E\subset\bar{O}$, and so for all $\mathbf{i}\in\mathcal N^k,
 E_{\mathbf{i}}\subset f_{\mathbf{i}}(\bar{O})$. Using the strong
 open set condition, choose $x_0\in E\cap O$ and $r_0, 0<r_0<1,$ such that
 $B(x_0,r_0)\subset O$. Then for all $\mathbf{i}\in\mathcal N^k$ by
 (\ref{eq4'}), $B(f_{\mathbf{i}}(x_0),cd_{\mathbf{i}}r_0)\subset
 f_{\mathbf{i}}(B(x_0,r_0))\subset f_{\mathbf{i}}(O)$, from which
 it follows that
 \begin{equation}\label{eq5}
 (E\setminus E_{\mathbf{i}})\cap
 B(f_{\mathbf{i}}(x_0),cd_{\mathbf{i}}r_0)=\emptyset.
 \end{equation}

 \begin{lemma}\label{le2} Suppose that the system $\{f_1,\dots,f_N\}$ satisfies the open set condition.
 Let $b=cr_0/2$ where
 $r_0$ is as above. Then for $\mathcal H^s$ almost all $x\in E$ there are
 $\mathbf{i}_k, k=1,2,\dots$, such that $x\in E_{\mathbf{i}_k},
 d_{\mathbf{i}_k}\to0$ and $(E\setminus E_{\mathbf{i}_k})\cap
 B(x,bd_{\mathbf{i}_k})=\emptyset$ for all $k=1,2,\dots$.
 \end{lemma}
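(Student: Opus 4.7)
The plan is to argue by contradiction, relying on the density-point property $\lim_{r\to 0}r^{-s}\mathcal H^s(B(x,r)\cap E\setminus A)=0$ recalled just above the lemma. I write the exceptional set as $A=\bigcup_{m\ge 1}A_m$, where
$$A_m=\{x\in E:\ (E\setminus E_{\mathbf{i}})\cap B(x,bd_{\mathbf{i}})\ne\emptyset\ \text{ for every }\mathbf{i}\text{ with }x\in E_{\mathbf{i}}\text{ and }d_{\mathbf{i}}<1/m\}.$$
If the lemma fails then $\mathcal H^s(A_m)>0$ for some $m$. I pick a point $x\in A_m$ at which $\lim_{r\to 0}r^{-s}\mathcal H^s(B(x,r)\cap E\setminus A_m)=0$ and, using (\ref{eq2'}), choose $\mathbf{i}_k\in\mathcal{N}^k$ with $x\in E_{\mathbf{i}_k}$; then $d_{\mathbf{i}_k}\to 0$, and for large $k$ also $d_{\mathbf{i}_k}<1/m$ and $bd_{\mathbf{i}_k}<R$.

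The core of the argument is to show that the inner piece $G_k:=E_{\mathbf{i}_k}\cap B(f_{\mathbf{i}_k}(x_0),bd_{\mathbf{i}_k})$ lies in $E\setminus A_m$ and has $\mathcal H^s(G_k)\ge cb^sd_{\mathbf{i}_k}^s$. For the inclusion, if $y\in G_k$ then by the triangle inequality and the identity $2b=cr_0$,
$$B(y,bd_{\mathbf{i}_k})\subset B(f_{\mathbf{i}_k}(x_0),2bd_{\mathbf{i}_k})=B(f_{\mathbf{i}_k}(x_0),cr_0 d_{\mathbf{i}_k}),$$
so (\ref{eq5}) forces $(E\setminus E_{\mathbf{i}_k})\cap B(y,bd_{\mathbf{i}_k})=\emptyset$, and together with $y\in E_{\mathbf{i}_k}$ and $d_{\mathbf{i}_k}<1/m$ this means $y\notin A_m$. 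For the measure bound, (\ref{eq5}) also gives $E\cap B(f_{\mathbf{i}_k}(x_0),bd_{\mathbf{i}_k})\subset E_{\mathbf{i}_k}$, so $G_k$ equals that intersection, and the lower bound in (\ref{eq3}) applied at $f_{\mathbf{i}_k}(x_0)\in E$ with radius $bd_{\mathbf{i}_k}<R$ produces the claimed bound $\mathcal H^s(G_k)\ge c(bd_{\mathbf{i}_k})^s$.

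Since $x\in E_{\mathbf{i}_k}$ and $E_{\mathbf{i}_k}$ has diameter $d_{\mathbf{i}_k}$, I have $G_k\subset E_{\mathbf{i}_k}\subset B(x,d_{\mathbf{i}_k})$. Setting $r_k:=d_{\mathbf{i}_k}\to 0$ this yields
$$r_k^{-s}\mathcal H^s(B(x,r_k)\cap E\setminus A_m)\ \ge\ r_k^{-s}\mathcal H^s(G_k)\ \ge\ cb^s>0,$$
contradicting the choice of $x$ as a density point. Hence $\mathcal H^s(A_m)=0$ for every $m$, and the lemma follows.

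The main obstacle I expect is pinpointing the correct quantitative statement: the defining property of $x\in A_m$ only produces a \emph{single} point of $E\setminus E_{\mathbf{i}_k}$ inside $B(x,bd_{\mathbf{i}_k})$, which is far too weak to violate an $s$-dimensional density-one condition. The trick is to flip the viewpoint and work \emph{inside} $E_{\mathbf{i}_k}$: by (\ref{eq5}) every point of $E_{\mathbf{i}_k}$ sufficiently close to the centre $f_{\mathbf{i}_k}(x_0)$ automatically has the good property, thereby supplying a macroscopic chunk of $E\setminus A_m$, of measure comparable to $d_{\mathbf{i}_k}^s$, inside the small ball $B(x,d_{\mathbf{i}_k})$.
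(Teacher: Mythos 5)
Your proof is correct and follows essentially the same route as the paper: the same decomposition into sets $A_m$, the same use of (\ref{eq5}) to show that the portion of $E$ near $f_{\mathbf{i}}(x_0)$ consists of points where the good property holds, the same lower measure bound from (\ref{eq3}), and the same contradiction with the density-point property. The only cosmetic difference is that you place the good chunk inside $B(x,d_{\mathbf{i}_k})$ rather than $B(x,(1+b)d_{\mathbf{i}_k})$, which changes nothing.
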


 \begin{proof}
 Let $A_m, m=1,2,\dots,$ be the set of $x\in E$ such that
 $(E\setminus E_{\mathbf{i}})\cap
 B(x,bd_{\mathbf{i}})\not=\emptyset$ whenever $x\in E_{\mathbf{i}}$
 and $d_{\mathbf{i}}<1/m$. We shall show that $\mathcal H^s(A_m)=0$
 which implies the lemma.

 Let $x\in A_m$. If $x\in E_{\mathbf{i}}$ and $d_{\mathbf{i}}<1/m$,
 then by (\ref{eq5}) and the choice of $b$,
 $$(E\setminus E_{\mathbf{i}})\cap B(f_{\mathbf{i}}(x_0),2bd_{\mathbf{i}})=\emptyset,$$
 and so
 $$(E\setminus E_{\mathbf{i}})\cap B(y,bd_{\mathbf{i}})=\emptyset\ \text{for all}\ y\in
 B(f_{\mathbf{i}}(x_0),bd_{\mathbf{i}}),$$
 whence
 $$B(f_{\mathbf{i}}(x_0),bd_{\mathbf{i}})\subset B(x,(1+b)d_{\mathbf{i}})\setminus A_m.$$
 Therefore by (\ref{eq3}),
 $$\mathcal H^s(B(x,(1+b)d_{\mathbf{i}})\cap E\setminus A_m)\geq c(bd_{\mathbf{i}})^s.$$
 It follows that $x$ cannot be a density point
 of $A_m$ and proves that $\mathcal H^s(A_m)=0$.
 \end{proof}

 \begin{thm} \label{thm1}
 Suppose that the systems $\{f_1,\dots,f_N\}$ and
 $\{g_1,\dots,g_P\}$ satisfy the open set condition. If there are
 an $\mathcal H^s$ measurable subset $E'$ of $E$ with $\mathcal
 H^s(E')>0$ and a bilipschitz map $h:E'\to F$, then there exists a
 bilipschitz map $\tilde h:E\to F$.
 \end{thm}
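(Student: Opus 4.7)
\emph{Proof plan.} The idea is to construct a sequence of ``rescaled copies'' of $h$ that are uniformly bilipschitz between larger and larger subsets of $E$ and $F$, then extract a limit via Lemma~\ref{le1}. The conformal cylinders $E_{\mathbf{i}}$ and $F_{\mathbf{j}}$ play the role of magnifying lenses: by composing $h$ with an inverse cylinder map on the source side and a cylinder map on the target side, the large Lipschitz distortion of these near-similarities cancels out, leaving a uniformly bilipschitz map.

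\emph{Step 1: choice of a base point.} Since $h$ is $L$-bilipschitz and $\mathcal{H}^s(E')>0$, we have $\mathcal{H}^s(h(E'))>0$. Using (2.1), Lemma~\ref{le2} for the system $\{f_i\}$, and Lemma~\ref{le2} applied to the system $\{g_j\}$ (with the analogous constants $b', r_0'$), I pick $x_0\in E'$ such that $x_0$ is a density point of $E'$ in $E$, satisfies the conclusion of Lemma~\ref{le2}, and such that $y_0:=h(x_0)$ is a density point of $h(E')$ in $F$ and satisfies the conclusion of Lemma~\ref{le2} for the system $\{g_j\}$. Bilipschitz invariance of density makes the intersection of these full-measure conditions non-empty inside $E'$.

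\emph{Step 2: rescaled maps.} Lemma~\ref{le2} at $x_0$ yields a sequence $\mathbf{i}_k$ with $d_{\mathbf{i}_k}\to 0$ and $E\cap B(x_0, b d_{\mathbf{i}_k})\subset E_{\mathbf{i}_k}$. Lemma~\ref{le2} applied in $F$ at $y_0$ yields a sequence of isolated cylinders $F_{\mathbf{j}}$. For each $k$ I select $\mathbf{j}_k$ from this sequence with $e_{\mathbf{j}_k}$ comparable (up to a fixed factor) to $L d_{\mathbf{i}_k}$, chosen large enough that $b'e_{\mathbf{j}_k}\ge L d_{\mathbf{i}_k}$; then $h(E'\cap E_{\mathbf{i}_k})\subset F\cap B(y_0, Ld_{\mathbf{i}_k})\subset F_{\mathbf{j}_k}$. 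Define
\[
A_k = f_{\mathbf{i}_k}^{-1}(E'\cap E_{\mathbf{i}_k})\subset E,\qquad h_k(z)=g_{\mathbf{j}_k}^{-1}\bigl(h(f_{\mathbf{i}_k}(z))\bigr),\qquad B_k=h_k(A_k)\subset F.
\]
By (\ref{eq4}) applied to both systems, $f_{\mathbf{i}_k}$ is bilipschitz with constant of order $d_{\mathbf{i}_k}$ and $g_{\mathbf{j}_k}$ with constant of order $e_{\mathbf{j}_k}\sim L d_{\mathbf{i}_k}$. Together with the $L$-bilipschitz $h$, the composition $h_k$ is $L'$-bilipschitz for some $L'$ independent of $k$.

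\emph{Step 3: density and passage to a limit.} Because $x_0$ is a density point of $E'$ in $E$, (\ref{eq4}) gives
$\mathcal{H}^s(E\setminus A_k)=\mathcal{H}^s\bigl(f_{\mathbf{i}_k}^{-1}(E_{\mathbf{i}_k}\cap E\setminus E')\bigr)\to 0$ as $k\to\infty$, and by Ahlfors regularity (\ref{eq3}) this forces, for each $x\in E$ and each $r>0$, $A_k\cap B(x,r)\neq\emptyset$ for all sufficiently large $k$. A diagonal argument yields $x_k\in A_k$ with $x_k\to x$ for every $x\in E$. The analogous assertion for $B_k$ in $F$ follows from $y_0$ being a density point of $h(E')$ in $F$. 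Lemma~\ref{le1} applied to (a subsequence of) the uniformly $L'$-bilipschitz maps $h_k$ produces the required $\tilde h:E\to F$.

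\emph{Main obstacle.} The delicate point is Step~2: Lemma~\ref{le2} gives isolated cylinders $F_{\mathbf{j}}$ at $y_0$ only along some sequence of scales tending to $0$, not at every scale, while the diameters $d_{\mathbf{i}_k}$ also form only a particular sequence. To keep the bilipschitz constants of the $h_k$ under control we need the ratio $e_{\mathbf{j}_k}/d_{\mathbf{i}_k}$ bounded above and below. The plan is to first fix the $F$-side isolated sequence and then, for each $\mathbf{j}_k$ selected from it, use (\ref{eq2}) together with Lemma~\ref{le2} on the $E$-side to choose $\mathbf{i}_k$ whose diameter sits in the required window $[e_{\mathbf{j}_k}/(Lb'),\,C e_{\mathbf{j}_k}/L]$, exploiting the bounded ratio of consecutive cylinder diameters along the address of $x_0$ to guarantee such a choice exists infinitely often.
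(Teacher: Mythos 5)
Your architecture is the paper's: the same rescaled maps $h_k=g_{\mathbf{j}_k}^{-1}\circ h\circ f_{\mathbf{i}_k}$ on $A_k=f_{\mathbf{i}_k}^{-1}(E'\cap E_{\mathbf{i}_k})$, uniform bilipschitz bounds from (\ref{eq4}) once the ratio $d_{\mathbf{i}_k}/e_{\mathbf{j}_k}$ is pinched, and a limit via Lemma~\ref{le1}; your Step~3 density argument (the global estimate $\mathcal H^s(E\setminus A_k)\to0$ combined with (\ref{eq3})) is a clean variant of the paper's pointwise contradiction and is fine. You also correctly diagnosed the one delicate point, the scale matching. But your proposed resolution still contains the gap it was meant to close: you require $\mathbf{i}_k$ to come from the Lemma~\ref{le2} sequence on the $E$-side \emph{and} to have diameter in the window $[e_{\mathbf{j}_k}/(Lb'),\,Ce_{\mathbf{j}_k}/L]$. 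The isolated cylinders produced by Lemma~\ref{le2} at $x_0$ occur only along some sequence of scales with no control on the gaps between successive diameters, so there is no reason an isolated cylinder should land in a window of fixed relative width for infinitely many $k$ --- this is exactly the objection you raised against the first ordering, and reversing the roles of $E$ and $F$ does not remove it. The ``bounded ratio of consecutive cylinder diameters along the address of $x_0$'' given by (\ref{eq2}) applies to \emph{all} cylinders along the address, not to the isolated subsequence.

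The repair is to notice that isolation is never needed on the $E$-side. It is used only in $F$, to force $h(E'\cap E_{\mathbf{i}_k})\subset B(y_0,b'e_{\mathbf{j}_k})\cap F\subset F_{\mathbf{j}_k}$ so that $g_{\mathbf{j}_k}^{-1}$ applies with the estimate (\ref{eq4}); nothing in your Steps~2--3 actually uses $E\cap B(x_0,bd_{\mathbf{i}_k})\subset E_{\mathbf{i}_k}$. So fix the $F$-side sequence $\mathbf{j}_k$ from Lemma~\ref{le2}, and for each $k$ take $\mathbf{i}_k$ to be a shortest multi-index with $x_0\in E_{\mathbf{i}_k}$ and $Ld_{\mathbf{i}_k}\le b'e_{\mathbf{j}_k}$; minimality and (\ref{eq2}) give $d_{\mathbf{i}_k}\ge (CL)^{-1}b'e_{\mathbf{j}_k}$, which is the required window. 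This is the paper's choice, and with it your proof closes. Two minor further remarks: reduce first to $E'$ compact so that Lemma~\ref{le1} applies to the sets $A_k$; and drop the claim that the $B_k$ accumulate on all of $F$ --- it is not needed (the theorem does not assert surjectivity, and the Remark following it shows surjectivity can fail), and as stated it does not follow from $y_0$ being a density point, since $h(E'\cap E_{\mathbf{i}_k})$ omits $h(E'\setminus E_{\mathbf{i}_k})\cap F_{\mathbf{j}_k}$, which need not be small.
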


 \begin{proof} We may assume that $E'$ is compact.
 Let $x\in E'$ be a density point of $E'$ such that also $y=h(x)$
 is a density point of $F'=h(E')$ and that, using Lemma \ref{le2},
 there are $b>0$ and $\mathbf{j}_k$ such that $y\in
 F_{\mathbf{j}_k}, e_{\mathbf{j}_k}\to 0$ and $(F\setminus
 F_{\mathbf{j}_k})\cap B(y,b e_{\mathbf{j}_k})=\emptyset$ for all
 $k=1,2,\dots$. Let $L$ be the bilipschitz constant of $h$.
 For $k=1,2,\dots,$ let $\mathbf{i}_k$ be a multi-index
 of shortest length such that $x\in E_{\mathbf{i}_k}$ and  $Ld_{\mathbf{i}_k}\leq be_
 {\mathbf{j}_k}$. Then
 $$h(E'\cap E_{\mathbf{i}_k})\subset B(y,Ld_{\mathbf{i}_k})\cap F'\subset
 B(y,be_{\mathbf{j}_k})\cap F'\cap F_{\mathbf{j}_k},$$
 and, by the minimality of $\mathbf{i}_k$ and (\ref{eq2}), if $\mathbf{i}_k=\mathbf{i}i$, then
 $$d_{\mathbf{i}_k}\geq C^{-1}d_{\mathbf{i}}\geq (CL)^{-1}be_{\mathbf{j}_k}.$$
 Denote
 $$h_k=g_{\mathbf{j}_k}^{-1}\circ h\circ f_{\mathbf{i}_k}:A_k:=f_{\mathbf{i}_k}^{-1}(E'\cap E_{\mathbf{i}_k})\to F.$$
 Then by (\ref{eq4}) $h_k$ is bilipschitz with bilip$(h_k)\leq L'$ with $L'$
 independent of $k$. To complete the proof we shall check that the
 condition of Lemma~\ref{le1} holds for $A_k$ and $A=E$.

 Suppose it doesn't. Then there are $a\in E$ and $r, 0<r<R,$ such that
 for some subsequence of $(\mathbf{i}_k)$, which we assume to be
 the full sequence, we have $B(a,r)\cap
 f_{\mathbf{i}_k}^{-1}(E'\cap E_{\mathbf{i}_k})=\emptyset$. Then by
 (\ref{eq4'})
 $$B(f_{\mathbf{i}_k}(a),cd_{\mathbf{i}_k}r)\cap E_{\mathbf{i}_k}\subset
 f_{\mathbf{i}_k}(B(a,r))\cap E_{\mathbf{i}_k}\subset B(x,d_{\mathbf{i}_k})\cap (E\setminus E').$$
 This gives by (\ref{eq3}) that
 $$\mathcal H^s(B(x,d_{\mathbf{i}_k})\cap(E\setminus E'))\geq c^{s+1}(d_{\mathbf{i}_k}r)^s.$$
 This contradicts the fact that $x$ is a density point and proves the theorem.
 \end{proof}
 \begin{rem} We would like to thank Tamas Keleti for the following
 observation: in  Theorem \ref{thm1} we cannot always get the map
 $\tilde{h}$ to be onto. To see this, take in Theorem \ref{thm1}
 $E'=E \subset \mathbb{R}$ to be a self-similar set satisfying the open set condition
 with positive Lebesgue measure and $E$ not being an interval, $F$ a compact
 interval containing $E$ and $h=id$, the identity map. That such a set $E$ exists can be seen
 for example from \cite{B}.

 Also note that, as a straightforward consequence of Theorem \ref{thm1},
 we find that a self-similar set in $\R^n$ with open set condition and positive Lebesgue
 measure must have a non-empty interior. A simple direct proof for this fact can be found in \cite{S}.

 \end{rem}
 \section{Strong separation condition}

 We say that the strong separation condition holds if the sets
 $f_i(E),i=1,\dots,N$, are disjoint. Then we can choose the
 constant $c$ so that, in addition to the previous properties,
 \begin{equation}\label{eq6}
 \text{dist}(E_{\mathbf{i} i},E_{\mathbf{i} j})\geq cd_{\mathbf{i}}\
 \text{for}\ i\not=j.
 \end{equation}

 \begin{thm} \label{thm 2}
 Suppose that $p=2$ and the systems $\{f_1,\dots,f_N\}$ and
 $\{g_1,g_2\}$ satisfy the strong separation condition. If
 there are an $\mathcal H^s$ measurable subset $E'$ of $E$ with
 $\mathcal H^s(E')>0$ and a bilipschitz map $h:~E'\to F$, then there
 exists a bilipschitz map $\tilde h:E\to F$ with $\tilde h(E)=F$.
 \end{thm}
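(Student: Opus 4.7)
The plan is to refine the proof of Theorem~\ref{thm1}, using Lemma~\ref{le2} on both sides and exploiting $p = 2$ together with SSC to upgrade the ``into'' conclusion to ``onto''. I would pick $x \in E'$ such that $x$ is a density point of $E'$ in $E$, $y := h(x)$ is a density point of $F' := h(E')$ in $F$, and Lemma~\ref{le2} yields isolation sequences $(\mathbf{i}_k^0)$ at $x$ (for $\{f_i\}$) and $(\mathbf{j}_k)$ at $y$ (for $\{g_1,g_2\}$) with a common constant $b>0$; such $x$ exist outside an $\mathcal H^s$-null set. Setting $L = \operatorname{bilip}(h)$ and, as in Theorem~\ref{thm1}, choosing $\mathbf{i}_k$ of shortest length with $x \in E_{\mathbf{i}_k}$ and $L d_{\mathbf{i}_k} \leq b e_{\mathbf{j}_k}$, minimality together with (\ref{eq2}) gives $d_{\mathbf{i}_k} \asymp e_{\mathbf{j}_k}$. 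Putting $A_k = f_{\mathbf{i}_k}^{-1}(E' \cap E_{\mathbf{i}_k})$ and $h_k = g_{\mathbf{j}_k}^{-1} \circ h \circ f_{\mathbf{i}_k} \colon A_k \to F$, the maps $h_k$ are uniformly $L'$-bilipschitz by (\ref{eq4}), and $A_k$ is asymptotically dense in $E$.

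The remaining task is to show that $h_k(A_k)$ is asymptotically dense in $F$, after which Lemma~\ref{le1} will produce the desired bilipschitz bijection $\tilde h \colon E \to F$. Since $h(E' \cap E_{\mathbf{i}_k}) \subset B(y, b e_{\mathbf{j}_k}) \cap F_{\mathbf{j}_k}$, the image $h_k(A_k)$ lies inside a set of bounded but not full diameter in $F$, so this density is not automatic. The hypotheses $p = 2$ and SSC on $F$ enter through the explicit disjoint decomposition
\[
F \setminus F_{\mathbf{j}_k} \;=\; \bigsqcup_{\ell=1}^{k} g_{j_1 \cdots j_{\ell-1}}\bigl(g_{3 - j_\ell}(F)\bigr),
\]
a union of only $k$ rescaled copies of $F$ of geometrically decreasing sizes. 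I would extend $h_k$ to a map $\hat h_k \colon E \to F$ by decomposing $E \setminus E_{\mathbf{i}_k}$ via the $E$-isolation sequence and SSC on $\{f_i\}$ into annular pieces, then matching these pieces level by level to the twin-branch pieces above, so that each annular $E$-piece is sent bilipschitzly onto its matched $F$-piece by a suitably rescaled copy of $h$ (the rescaling chosen via (\ref{eq1}) and (\ref{eq4})). Lemma~\ref{le1} applied to the $\hat h_k$ would then produce the required surjective bilipschitz limit.

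The main obstacle is verifying that the glued map $\hat h_k$ has a bilipschitz constant uniform in $k$: one must control the map across the interfaces between annular pieces and between those pieces and the central one. The hypothesis $p = 2$ is essential here because it keeps the twin-branch decomposition of $F \setminus F_{\mathbf{j}_k}$ combinatorially linear (one twin per level), making the interface bookkeeping tractable; with $p \geq 3$ the matching of an $N$-adic $E$-decomposition against a $p$-adic $F$-decomposition is substantially harder and the uniform bilipschitz control can fail.
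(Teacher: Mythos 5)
There is a genuine gap, and it sits exactly at the step that carries the whole content of the theorem. Your plan for surjectivity is to decompose $F\setminus F_{\mathbf{j}_k}$ into the twin branches $g_{j_1\cdots j_{\ell-1}}(g_{3-j_\ell}(F))$ (that identity is correct), to decompose $E\setminus E_{\mathbf{i}_k}$ into sibling cylinders, and to send ``each annular $E$-piece bilipschitzly \emph{onto} its matched $F$-piece by a suitably rescaled copy of $h$.'' A rescaled copy of $h$ is a map of the form $g_{\mathbf{u}}\circ h\circ f_{\mathbf{v}}^{-1}$; its domain is $f_{\mathbf{v}}(E')$, not the full cylinder $E_{\mathbf{v}}$, and its image is $g_{\mathbf{u}}(h(E'))=g_{\mathbf{u}}(F')$, which is a proper subset of the cylinder $g_{\mathbf{u}}(F)$ whenever $h$ is not onto. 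Producing a surjective bilipschitz map from a copy of $E$ onto a copy of $F$ is precisely the conclusion of the theorem, so this step is circular. In addition, the combinatorial matching is not available: $E\setminus E_{\mathbf{i}_k}$ splits into $(N-1)\,|\mathbf{i}_k|$ sibling cylinders while $F\setminus F_{\mathbf{j}_k}$ splits into $|\mathbf{j}_k|$ twins, the two lengths are unrelated, and the diameters of the pieces at corresponding depths are not comparable, so a level-by-level bijection of pieces with uniformly bilipschitz gluing is not justified and will fail in general.

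The paper's route is quite different and avoids ever mapping a piece of $E$ \emph{onto} a piece of $F$. It first invokes Theorem \ref{thm1} to assume $E'=E$. Then, via the Falconer--Marsh covering argument, $h(E_{\mathbf{i}_k})$ is written as $\bigcup_{l=1}^{m_k}F'\cap F_{k,l}$ with $m_k\leq m$ bounded and $d(F_{k,l})\asymp d_{\mathbf{i}_k}$ by (\ref{eq7}). The hypothesis $p=2$ is used for something else than in your sketch: in a binary tree one can partition the \emph{whole} of $F$ into exactly $m_k$ cylinders $\tilde F_{k,i}$ of diameter bounded below (by repeatedly splitting one cylinder into its two children; since $m_k\leq m$ the diameters stay $\geq c_2$), whereas for $p\geq3$ only counts of the form $1+j(p-1)$ are realizable. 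Each $F'\cap F_{k,i}$ is then rescaled up by $g_{k,i}^{-1}$ and placed into $\tilde F_{k,i}$ by $\tilde g_{k,i}$; the images are only subsets of the $\tilde F_{k,i}$, and surjectivity is recovered in the limit via Lemma \ref{le1}, the asymptotic density of $h_k(E)$ in $F$ being forced by the density-point property of $y=h(x)$ in $F'$ (a ball missed by $h_k(E)$ pulls back to a ball of radius $\asymp d_{\mathbf{i}_k}$ near $y$ missing $F'$, contradicting (\ref{eq3})). You would need to replace your gluing step by an argument of this kind; as written, the proposal does not close.
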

 \begin{proof} By Theorem~\ref{thm1}, we may assume that $E'=E$.
 Let $x\in E$ be such that $y=h(x)$
 is a density point of $F'=h(E)$. For every $k\in \mathbb{N}$,
 choose $E_{\mathbf{i}_k}$ such that $x\in E_{\mathbf{i}_k}$ and
 $d_{\mathbf{i}_k}\to0$. Then there are sets
 $F_{k,l}=F_{\mathbf{j}_{k(l)}}$ and the corresponding maps
 $g_{k,l}=g_{\mathbf{j}_{k(l)}},l=1,\dots m_k$, such that
 $d(F_{k,l})\geq c_1d_{\mathbf{i}_k}$ and $m_k\leq m$, where
 $c_1>0$ and $m$ are independent of $k,  F'\cap
 F_{k,l}\not=\emptyset$ and
 $$h( E_{\mathbf{i}_k})=\bigcup_{l=1}^{m_k}F'\cap F_{k,l}.$$
 This is essentially Lemma 3.2 in \cite{FM} but we give a quick
 proof. By (\ref{eq6}) dist$(E_{\mathbf{i}_k},E_{\lambda})\geq
 cd_{\mathbf{i}_k}$ whenever $E_{\mathbf{i}_k}\cap
 E_{\lambda}=\emptyset$. If for such $\lambda$ and for some
 $\mathbf{j}$, $h(E_{\mathbf{i}_k})\cap
 F_{\mathbf{j}}\not=\emptyset$ and $h(E_{\lambda})\cap
 F_{\mathbf{j}}\not=\emptyset$, then $d(F_{\mathbf{j}})\geq$
 dist$(h(E_{\mathbf{i}_k}),h(E_{\lambda}))\geq
 (c/L)d_{\mathbf{i}_k}$ where $L$ is the bilipschitz constant of $h$.
 Therefore we can take as $F_{k,l}$ all the maximal sets
 $F_{\mathbf{j}}$ such that $h(E_{\mathbf{i}_k})\cap
 F_{\mathbf{j}}\not=\emptyset$ and
 $d(F_{\mathbf{j}})<(c/L)d_{\mathbf{i}_k}$. Denoting $d_{k,l}=d(F_{k,l})$ we have then by (\ref{eq2}) (as $c/L\leq1$),
 \begin{equation}\label{eq7}
 c(LC)^{-1}d_{\mathbf{i}_k}\leq d_{k,l}\leq d_{\mathbf{i}_k}.
 \end{equation}

 Since $p=2$, we can choose disjoint sets $\tilde{F}_{k,i}=F_{\mathbf{j}_{l(i)}}, i=1,\dots,m_k$, such that
 $d(\tilde{F}_{k,i})\geq c_2$, with $c_2>0$ independent of $k$, and
 $F=\cup_{i=1}^{m_k}\tilde{F}_{k,i}$. Let $\tilde{g}_{k,i}$ be
 the corresponding maps. Note that the maps $\tilde{g}_{k,i}$ are selected from a fixed finite family,
 so their bilipschitz constants have an upper bound independent of $k$. Define
 $$h_k: E\to F$$
 by setting
 $$h_k(x)=\tilde{g}_{k,i}(g_{k,i}^{-1}(h(f_{\mathbf{i}_k}(x))))\ \text{if}\ x\in f_{\mathbf{i}_k}^{-1}(h^{-1}(F'\cap F_{k,i})).$$
 Then bilip$(h_k)\leq L'$ where $L'$ is independent of $k$.
 Namely, the case when $x$, $y\in f_{\mathbf{i}_k}^{-1}(h^{-1}(F'\cap F_{k,i}))$,
 follows by composition. Furthermore, the strong separation
 condition provides us with constants $M_1, M_2>0$ such that
 $$\textrm{dist}(\tilde{F}_{k,i},\tilde{F}_{k,j})\ge M_1$$
 and
 $$\textrm{dist}(F_{k,i},F_{k,j})\ge M_2 d_{\mathbf{i}_k}$$
for all $i\neq j, i,j=1,...m_{k}$. This takes care of the case $x
\in f_{\mathbf{i}_k}^{-1}(h^{-1}(F'\cap~F_{k,i}))$, $y\in
f_{\mathbf{i}_k}^{-1}(h^{-1}(F'\cap F_{k,j}))$ with $i\neq j$.

 We still need to check that the sets
 $$h_k(E)=\bigcup_{i=1}^{m_k}\tilde{g}_{k,i}(g_{k,i}^{-1}(F'\cap F_{k,i}))$$
 and $F$ satisfy the condition for $B_k$ and $B$ in Lemma~\ref{le1}. Suppose this
 is not so. Then there are $a\in F$ and $r, 0<r<R,$ such that $B(a,r)\cap
 h_k(E)=\emptyset$ for some arbitrarily large $k$. For such a
 $k$, $a$ belongs to some $\tilde F_{k,i_0}$. For some $c_3>0$ independent of $k$,
 $B(\tilde g^{-1}_{k,i_0}(a),c_3r)\subset \tilde g^{-1}_{k,i_0}(B(a,r))$ and so, with
 $b=g_{k,i_0}(\tilde g^{-1}_{k,i_0}(a))$ by (\ref{eq4'}),
 $$B(b,c_3crd_{k,i_0})\subset g_{k,i_0}(\tilde g^{-1}_{k,i_0}(B(a,r)))\subset\R^n\setminus F'.$$
 Since $y\in h(E_{\mathbf{i}_k}), b\in F_{k,i_0}, F_{k,i_0}\cap h(E_{\mathbf{i}_k})\not=\emptyset$,
 and $d_{k,i_0}\leq d_{\mathbf{i}_k}$, by (\ref{eq7}), we have $d(y,b)\leq(L+1)d_{\mathbf{i}_k}$ and so by (\ref{eq7}),
 $$B(b,c_4d_{\mathbf{i}_k})\subset B(b,c_3crd_{k,i_0})
 \subset B(y,(L+1)d_{\mathbf{i}_k}+c_3rd_{k,i_0})\subset B(y,c_5d_{\mathbf{i}_k})$$
 with $c_4=c_3c^2(LC)^{-1}r, c_5=L+1+c_3r$. Hence by (\ref{eq3}),
 $$\mathcal H^s(B(y,c_5d_{\mathbf{i}_k})\cap(F\setminus F'))\geq \mathcal H^s(B(b,c_4d_{\mathbf{i}_k})\cap F)
 \geq c(c_4d_{\mathbf{i}_k})^s$$
 contradicting the fact that $y$ is a density point of $F'$.

\end{proof}

\begin{rem} We don't know if the condition $p=2$ is needed in Theorem \ref{thm 2}. Clearly the proof
gives for general $p$ that there is a bilipschitz map of $E$ onto $F_0$ where $F_0$ is a finite union of
sets $F_{\mathbf{j}}$. This raises a question: under what conditions is such a union Lipschitz
equivalent with $F$?

Falconer and Marsh proved in \cite{FM} that for self-similar sets $E$ and $F$ satisfying the
strong separation condition the Lipschitz equivalence of $E$ and $F$ implies certain algebraic conditions on
the similarity ratios of the generating maps. Possibly their method could be modified to prove the
same conditions already if $E$ and $F$ have Lipschitz equivalent measurable subsets of positive
measure. However, we could not deduce from this the Lipschitz equivalence of $E$ and $F$ in
general since it is not clear when the necessary conditions of Falconer and Marsh are also
sufficient.
\end{rem}

 \end{document}